\documentclass{svproc}

\usepackage[utf8]{inputenc}
\usepackage[T1]{fontenc}
\usepackage{url}

\usepackage{graphicx}
\usepackage{amsfonts}
\usepackage{algorithm}
\usepackage{algpseudocode}
\usepackage{amsmath}
\usepackage{booktabs}
\usepackage{array}

\newcommand{\R}{\mathbb{R}}

\makeatletter
\@ifundefined{theorem}{\spnewtheorem{theorem}{Theorem}{\bfseries}{\itshape}}{}
\@ifundefined{definition}{\spnewtheorem{definition}{Definition}{\bfseries}{\rmfamily}}{}
\@ifundefined{remark}{\spnewtheorem{remark}{Remark}{\bfseries}{\rmfamily}}{}
\@ifundefined{example}{\spnewtheorem{example}{Example}{\bfseries}{\rmfamily}}{}
\makeatother

\begin{document}
	
	\mainmatter
	
	\title{Riemannian Regression}
	
	\titlerunning{Riemannian Regression}
	
	\author{Oldemar Rodríguez\inst{1}}
	
	\authorrunning{Oldemar Rodríguez}
	
	\institute{School of Mathematics and Research Center for Pure and Applied Mathematics (CIMPA), University of Costa Rica, San José, Costa Rica,\\
		\email{oldemar.rodriguez@ucr.ac.cr},\\ WWW home page:
		\texttt{https://oldemarrodriguez.com/}}
	
	\maketitle
	
	\begin{abstract}
		Classical linear regression assumes that the relevant geometry of the predictor space is Euclidean and that all centered observations contribute to the least-squares fit in the same geometric scale. This paper proposes \emph{Riemannian Regression}, a regression framework in which the usual vector differences are replaced by locally weighted differences induced by a data-dependent similarity structure. We introduce a generalized framework, termed {\em Riemannian Regression}, extending classic regression to any data endowed with a local distance structure. By equipping data tables with local metrics, we adapt regression model to incorporate manifold geometry.
		Given a similarity matrix $S=(S_{ij})$, obtained from UMAP, ISOMAP, or DBSCAN \cite{mcinnes,isomap,dbscan}, we define the dissimilarity coefficient $\rho_{ij}=1-S_{ij}$ and the induced subtraction
		$
		x_i\ominus x_j=\rho_{ij}(x_i-x_j).
		$
		A Riemannian center $g=x_\lambda$ is selected as a discrete Fr\'{e}chet mean, and regression is performed on the Riemannian-centered variables $X_R=W X_{c,\lambda}$ and $y_R=W y_{c,\lambda}$, where $W=\operatorname{diag}(\rho_{1\lambda},\ldots,\rho_{n\lambda})$. The resulting estimator has the weighted least-squares form
		$
		\widehat\beta_R=(X_{c,\lambda}^{t}W^2X_{c,\lambda})^{-1}X_{c,\lambda}^{t}W^2y_{c,\lambda}.
		$
		The proposed approach preserves the linear form of the regression model while changing the geometry of the fit. The paper develops three ways to construct the local metric: UMAP-based fuzzy similarities, ISOMAP-based normalized geodesic distances, and DBSCAN-based density similarities. Simulated examples and the Abalone data set illustrate how Riemannian Regression can reduce the influence of locally anomalous observations and adapt to regions with different local densities.
		\keywords{Riemannian regression, UMAP, ISOMAP, DBSCAN, local metrics, weighted least squares, Riemannian statistics}
	\end{abstract}
	
	\section{Introduction}
	
	Classical multiple linear regression models a response variable $y\in\R^n$ from a predictor matrix $X\in\R^{n\times p}$ by assuming that the relevant differences between observations are Euclidean. If $\bar{x}$ and $\bar{y}$ denote the usual arithmetic means, the centered least-squares estimator is
	$
	\widehat\beta_c=(X_c^tX_c)^{-1}X_c^t y_c,
	$
	where $X_c$ has rows $x_i-\bar{x}$ and $y_c$ has entries $y_i-\bar{y}$.
	
	This approach is effective when the global Euclidean geometry adequately represents the structure of the data. However, many data sets contain nonlinear manifolds, local density differences, clusters, disconnected regions, or points that are close in Euclidean coordinates but far in the intrinsic geometry of the data. In those cases, treating all centered differences equally may distort the regression fit.
	
	The objective of this paper is to extend the Riemannian statistical framework previously used for Riemannian Principal Component Analysis in \cite{Rodriguez2026PCA} to linear regression. The central idea is to keep the linear regression model, but replace the Euclidean centering and subtraction by a Riemannian centering based on local similarities. Given a similarity matrix $S=(S_{ij})$, we define
	$
	\rho_{ij}=1-S_{ij},
	$
	and use $\rho_{ij}$ to weight differences between observations. Thus, the Riemannian subtraction is
	$
	x_i\ominus x_j=\rho_{ij}(x_i-x_j).
	$
	This makes observations that are highly similar locally contribute less to the difference, while observations that are far apart in the induced geometry retain larger differences.
	
	This paper develops a unified framework in which $S$ can be obtained from three different sources:
	\begin{enumerate}
		\item UMAP, through fuzzy simplicial similarities \cite{mcinnes};
		\item ISOMAP, through normalized geodesic distances on a nearest-neighbor graph \cite{isomap};
		\item DBSCAN, through density, neighborhood, cluster, and noise information \cite{dbscan}.
	\end{enumerate}
	All three methods produce a similarity matrix $S$ and then follow the common pipeline
	$
	S \longrightarrow \rho=1-S \longrightarrow x_i\ominus x_j=\rho_{ij}(x_i-x_j).
	$

	\section{From Euclidean Data Tables to Discrete Riemannian Manifolds}
	\label{sec:discrete_riemannian_manifold}
	
	The construction used in this paper is motivated by the classical problem of analyzing data on a Riemannian manifold. Let $M$ be a manifold and let $H$ be a geodesic submanifold of $M$. The projection of a point $x\in M$ onto $H$ is defined as the point on $H$ that is nearest to $x$ in geodesic distance. Thus, the projection operator $\pi_H:M\to H$ is given by
	\begin{equation}
		\label{eq:projection_geodesic_submanifold}
		\pi_H(x)
		=
		\arg\min_{y\in H} d_M(x,y)^2.
	\end{equation}
	Since this projection is defined through a minimization problem, there is no general guarantee that the projection exists or that it is unique. However, by restricting the analysis to a sufficiently small neighborhood around the mean, the projection onto a geodesic submanifold through the mean is unique and can be approximated linearly in the tangent space of $M$. This is the idea behind Principal Geodesic Analysis (PGA) \cite{flet,pennec}: we approximates the projection in the tangent space and then searches for nested geodesic submanifolds that maximize the projected variance of the data; see \cite{flet} for details.
	
	Following the standard tangent-space treatment of Riemannian data \cite{pennec}, more explicitly, given manifold-valued observations
	$
	x_1,x_2,\ldots,x_n\in M,
	$
	we first compute the intrinsic mean
	$$
	\mu
	=
	\arg\min_{x\in M}
	\sum_{i=1}^{n}d_M(x,x_i)^2.
	$$
	Then each observation is represented in the tangent space at the mean. In the usual tangent-space approximation, this is written as
	$
	u_i=x_i-\mu,
	$
	with the understanding that, on a true manifold, this expression represents the local tangent vector associated with the displacement from $\mu$ to $x_i$. The covariance matrix is then approximated by
	$$
	C=
	\frac{1}{n}
	\sum_{i=1}^{n}
	u_i
	u_i^t,
	$$
	and its eigendecomposition gives principal directions and variances.
	
	The objective of the present work is to generalize this idea to an arbitrary Euclidean data table, even when no smooth manifold is explicitly given. Let
	$$
	X=\{x_1,x_2,\ldots,x_n\}\subset\mathbb{R}^p
	$$
	be a data set embedded in a high-dimensional Euclidean space. The key step is to replace the unknown smooth manifold by a \emph{local manifold approximation}. For each data point $x_i$, we define a local neighborhood using a base metric $d$. The $k$ nearest neighbors of $x_i$ are denoted by
	$$
	\mathcal{N}_k(x_i)
	=
	\left\{
	x_j\in X\setminus\{x_i\}:
	\begin{array}{l}
		d(x_i,x_j)\leq d(x_i,x_\ell)\\
		\text{for points }x_\ell\text{ outside the selected }k\text{ neighbors}
	\end{array}
	\right\}.
	$$
	Thus, $\mathcal{N}_k(x_i)$ is the set of the $k$ closest points to $x_i$ in $\mathbb{R}^p$.
	
	The local patch centered at $x_i$ is defined as a weighted fuzzy simplicial set
	$$
	\mathcal{P}_i
	=
	\left\{
	(x_i,x_j,w_{ij}):
	x_j\in\mathcal{N}_k(x_i),\; w_{ij}>0
	\right\}.
	$$
	Here, $x_i$ is the anchor point, $x_j\in\mathcal{N}_k(x_i)$ are the local neighbors, and $w_{ij}$ is the affinity or edge weight between $x_i$ and $x_j$. In the UMAP construction, these weights are computed from local connectivity and local scale parameters; in ISOMAP, they are replaced by normalized geodesic distances on a nearest-neighbor graph; and in DBSCAN, they are replaced by density, and cluster-based similarities.
	
	The individual patches are combined to form a global fuzzy simplicial set, or weighted graph,
	$$
	G
	=
	\bigcup_{i=1}^{n}\mathcal{P}_i.
	$$
	This graph should be interpreted as a discrete atlas of overlapping local neighborhoods. Through the one-to-one relationship between the cover of local patches and its nerve, as formalized by the Nerve Theorem and its consequences, this construction provides a discrete topological and geometric representation of the underlying data structure; see \cite{Rodriguez2024Palermo} and  \cite{rodriguez2025}. In this sense, the original Euclidean data table is transformed into a data table endowed with a discrete Riemannian-like structure.
	
	Consequently, the similarity matrix
	$
	S=(S_{ij})_{n\times n}
	$
	acts as a discrete local metric object. Large values of $S_{ij}$ indicate strong local affinity, while small values indicate weak local affinity or separation in the induced geometry. We therefore define
	$
	\rho_{ij}=1-S_{ij},
	$
	and use this quantity to define the Riemannian subtraction
	$
	x_i\ominus x_j
	=
	\rho_{ij}(x_i-x_j).
	$
	Thus, the role played by tangent vectors in a smooth manifold is replaced, in a discrete data table, by locally weighted Euclidean differences. In particular, if $g=x_\lambda$ is the discrete Riemannian mean, then the centered differences used by Riemannian Regression are
	$$
	x_i\ominus g
	=
	\rho_{i\lambda}(x_i-x_\lambda),
	\qquad
	y_i\ominus y_\lambda
	=
	\rho_{i\lambda}(y_i-y_\lambda).
	$$
	This is the point at which the Euclidean table is no longer treated as a purely Euclidean object: it is treated as a finite sample equipped with local neighborhoods, local weights, and Riemannian-type centered differences.
	
	It is also important to distinguish the present construction from PGA \cite{flet,pennec}. PGA assumes that the manifold $M$ and its geodesic distance, tangent spaces, and exponential or logarithmic maps are already available. Riemannian Regression starts from a standard data table and constructs a discrete local geometry before fitting a regression model. Hence, the proposed method is not a replacement for PGA on known manifolds; rather, it is a data-table analogue for situations in which only observations, a base dissimilarity, and local neighborhood information are available.
	
	Finally, once the similarity matrix $S$ and the center $g=x_\lambda$ are fixed, the regression estimator derived in this paper is algebraically a weighted least-squares estimator. This observation is intentional rather than problematic: the contribution is not that weighted least squares is new, but that the weights $\rho_{i\lambda}$ are obtained from a data-induced local geometry and used to define a coherent Riemannian centering of both predictors and response. Moreover, the construction is not tied to the UMAP embedding algorithm or its low-dimensional optimization step. UMAP \cite{mcinnes}, ISOMAP \cite{isomap}, and DBSCAN \cite{dbscan} are used here only as alternative mechanisms for constructing the local similarity matrix $S$. This makes the framework independent of any single embedding method and allows the analyst to compare different local geometries within the same regression model.

	\section{Local Similarities and Riemannian Differences}
	
	Let
	$$
	X=
	\begin{pmatrix}
		x_1^t\\
		x_2^t\\
		\vdots\\
		x_n^t
	\end{pmatrix}
	\in \R^{n\times p},
	\qquad
	x_i=(x_{i1},\ldots,x_{ip})\in\R^p,
	$$
	and let
	$
	y=(y_1,\ldots,y_n)^t\in\R^n
	$
	be the response variable.
	
	Suppose that a local-geometry algorithm produces a symmetric similarity matrix
	$
	S=(S_{ij})_{n\times n},
	\;
	0\leq S_{ij}\leq 1.
	$
	The larger $S_{ij}$ is, the more similar $x_i$ and $x_j$ are under the chosen local geometry. We define the associated Riemannian dissimilarity coefficient as
	$
	\rho_{ij}=1-S_{ij}.
	$
	Therefore, $\rho_{ij}\approx 0$ for very similar observations and $\rho_{ij}\approx 1$ for weakly related observations.
	
	\begin{definition}[Riemannian subtraction]
		Given $\rho_{ij}=1-S_{ij}$, the Riemannian subtraction induced by $S$ is defined by
		$
		x_i\ominus x_j=\rho_{ij}(x_i-x_j).
		$
		The associated induced distance is
		$
		d_R(x_i,x_j)=\|x_i\ominus x_j\|=\rho_{ij}\|x_i-x_j\|.
		$
	\end{definition}
	
	The diagonal convention used throughout the paper is
	$
	S_{ii}=0,
	\;
	\rho_{ii}=1.
	$
	This convention is harmless, because
	$
	x_i\ominus x_i=\rho_{ii}(x_i-x_i)=0.
	$
	It also matches the convention used in the computational implementations of the Riemannian framework.

	\section{Construction of the Similarity Matrix}
	\label{sec:similarities}
	
	The previous section explains why a similarity matrix can be interpreted as the object that equips the original Euclidean data table with a discrete local Riemannian-like geometry. We now describe how this matrix is constructed in practice. The proposed Riemannian regression method depends on the construction of a similarity matrix
	$
	S=(S_{ij})_{n\times n},
	$
	from which the dissimilarity coefficients are obtained through the common rule
	$
	\rho_{ij}=1-S_{ij}.
	$
	The purpose of this section is to describe in detail three possible ways to generate the local geometry of the data: UMAP, ISOMAP, and DBSCAN. Although the three constructions are different, they all produce a matrix $S$ with entries in $[0,1]$, and therefore they all induce the same Riemannian subtraction
	$
	x_i\ominus x_j=\rho_{ij}(x_i-x_j).
	$

	\subsection{UMAP-Based Local Similarities}
	\label{subsec:umap_similarity}
	
	UMAP \cite{mcinnes} constructs a fuzzy local neighborhood structure from the observations. Let
	$
	d_{ij}^{(0)}=d(x_i,x_j)
	$
	denote a base distance, typically the Euclidean distance. For each observation $x_i$, let
	$
	\mathcal{N}_k(x_i)=\{x_{i_1},\ldots,x_{i_k}\}
	$
	be the set of its $k$ nearest neighbors.
	
	The first local parameter is the local connectivity radius. To avoid confusion with the dissimilarity coefficient $\rho_{ij}$, we denote this UMAP local connectivity parameter by 
	$
	r_i
	=
	\min\{d(x_i,x_j):x_j\in\mathcal{N}_k(x_i),\ d(x_i,x_j)>0\}.
	$
	This guarantees that each point is strongly connected to at least one non-identical neighbor.
	
	The second local parameter is a scale $\sigma_i>0$, chosen so that the local fuzzy neighborhood around $x_i$ has approximately constant cardinality. It is computed by solving
	$$
	\sum_{x_j\in\mathcal{N}_k(x_i)}
	\exp\left(
	-
	\frac{\max\{0,d(x_i,x_j)-r_i\}}{\sigma_i}
	\right)
	=
	\log_2(k).
	$$
	Given $r_i$ and $\sigma_i$, UMAP defines the directed membership strength from $x_i$ to $x_j$ as
	$$
	a_{ij}
	=
	\begin{cases}
		\displaystyle
		\exp\left(
		-
		\frac{\max\{0,d(x_i,x_j)-r_i\}}{\sigma_i}
		\right),
		& x_j\in\mathcal{N}_k(x_i),\\[8pt]
		0,& x_j\notin\mathcal{N}_k(x_i).
	\end{cases}
	$$
	Thus, $A=(a_{ij})$ is generally a directed matrix of local affinities.
	
	To obtain a symmetric similarity matrix, UMAP applies the fuzzy union rule
	$
	S_{ij}^{\textrm{\tiny UMAP}}
	=
	a_{ij}+a_{ji}-a_{ij}a_{ji}.
	$
	Equivalently, in matrix notation,
	$
	S^{\textrm{\tiny UMAP}}
	=
	A+A^t-A\circ A^t,
	$
	where $\circ$ denotes the Hadamard product. The value $S_{ij}^{\textrm{\tiny UMAP}}$ lies in $[0,1]$: values close to $1$ indicate strong local affinity, whereas values close to $0$ indicate weak local affinity or no local connection.
	
	As in the Riemannian framework used here, we impose the diagonal convention
	$
	S_{ii}^{\textrm{\tiny UMAP}}=0.
	$
	Then the UMAP-induced dissimilarity coefficient is
	$
	\rho_{ij}^{\textrm{\tiny UMAP}}
	=
	1-S_{ij}^{\textrm{\tiny UMAP}},
	$
	and the corresponding Riemannian subtraction is
	$
	x_i\ominus x_j
	=
	\rho_{ij}^{\textrm{\tiny UMAP}}(x_i-x_j).
	$
	Therefore, points that are strongly connected in the UMAP fuzzy graph produce small Riemannian differences, while weakly connected points keep larger differences.
	
	\subsection{ISOMAP-Based Local Similarities}
	\label{subsec:isomap_similarity}
	
	The central idea is to use approximate geodesic distances computed on a nearest-neighbor graph and then transform those distances into a similarity matrix compatible with the same rule used above
	$
	\rho_{ij}=1-S_{ij}.
	$
	
	First, we compute the usual Euclidean distance matrix
	$$
	D^{(0)}=(d_{ij}^{(0)})_{n\times n},
	\qquad
	d_{ij}^{(0)}=\|x_i-x_j\|
	=
	\sqrt{\sum_{\ell=1}^{p}(x_{i\ell}-x_{j\ell})^2}.
	$$
	From this distance matrix, ISOMAP \cite{isomap} constructs a weighted $k$-nearest-neighbor graph
	$
	G_k=(V,E_k),
	\qquad
	V=\{x_1,\ldots,x_n\}.
	$
	For each point $x_i$, the set $\mathcal{N}_k(x_i)$ contains its $k$ nearest neighbors. The initial graph weight matrix can be written as
	$$
	A_{ij}^{(k)}
	=
	\begin{cases}
		d_{ij}^{(0)},&\text{if }x_j\in\mathcal{N}_k(x_i)\text{ or }x_i\in\mathcal{N}_k(x_j),\\
		+\infty,&\text{if there is no edge between }x_i\text{ and }x_j,\\
		0,&\text{if }i=j.
	\end{cases}
	$$
	
	In order to compute geodesic distances between all pairs of observations, the graph must be connected. Therefore, the algorithm starts with
	$
	k=
	$ number of nearest neighbors, 
	and if the graph is fragmented, increases $k$ successively adding one to $k$
	until the first value $k_*$ for which $G_{k_*}$ is connected is found. This value is stored as
	$k_*$.
	
	Once a connected graph is obtained, ISOMAP replaces direct Euclidean distances by approximate geodesic distances over the graph. For each pair of observations $x_i$ and $x_j$, we define
	$$
	d_{ij}^{\textrm{\tiny ISOMAP}}
	=
	\min_{\gamma:i\leadsto j}
	\sum_{(a,b)\in\gamma}d_{ab}^{(0)},
	$$
	where $\gamma:i\leadsto j$ denotes a path in the graph connecting $x_i$ and $x_j$. This produces the matrix
	$
	D^{\textrm{\tiny ISOMAP}}
	=
	\left(d_{ij}^{\textrm{\tiny ISOMAP}}\right)_{n\times n}.
	$
	
	Let be
	$
	d_{\max}
	=
	\max_{1\leq i,j\leq n}d_{ij}^{\textrm{\tiny ISOMAP}}.
	$
	The normalized ISOMAP distances are
	$$
	\widetilde d_{ij}^{\textrm{\tiny ISOMAP}}
	=
	\frac{d_{ij}^{\textrm{\tiny ISOMAP}}}{d_{\max}},
	\qquad
	0\leq \widetilde d_{ij}^{\textrm{\tiny ISOMAP}}\leq 1.
	$$
	To keep the same computational pattern as UMAP, we define an ISOMAP-induced similarity by
	$
	S_{ij}^{\textrm{\tiny ISOMAP}}
	=
	1-\widetilde d_{ij}^{\textrm{\tiny ISOMAP}}
	=
	1-
	\frac{d_{ij}^{\textrm{\tiny ISOMAP}}}{d_{\max}},
	\; i\neq j.
	$
	If two points are close in the ISOMAP geodesic geometry, then
	$
	d_{ij}^{\textrm{\tiny ISOMAP}}\approx 0
	\qquad\Longrightarrow
	S_{ij}^{\textrm{\tiny ISOMAP}}\approx 1.
	$
	If two points are far apart on the geodesic graph, then
	$
	d_{ij}^{\textrm{\tiny ISOMAP}}\approx d_{\max}$ and 
	$S_{ij}^{\textrm{\tiny ISOMAP}}\approx 0.
	$
	Finally, to match the same convention used in the UMAP-based construction, we set
	$
	S_{ii}^{\textrm{\tiny ISOMAP}}=0,
	\; i=1,\ldots,n.
	$
	Therefore, for $i\neq j$,
	$
	\rho_{ij}^{\textrm{\tiny ISOMAP}}
	=
	1-S_{ij}^{\textrm{\tiny ISOMAP}}
	=
	\frac{d_{ij}^{\textrm{\tiny ISOMAP}}}{d_{\max}},
	$
	whereas on the diagonal
	$
	\rho_{ii}^{\textrm{\tiny ISOMAP}}=1.
	$
	This diagonal convention does not create any problem, since
	$
	x_i\ominus x_i
	=
	\rho_{ii}(x_i-x_i)=0.
	$
	Thus, ISOMAP preserves the interpretation of $\rho_{ij}$ as a normalized approximate geodesic distance.
	
	\subsection{DBSCAN-Based Local Similarities}
	\label{subsec:dbscan_similarity}
	
	Unlike UMAP and ISOMAP, DBSCAN \cite{dbscan} does not directly return a continuous density. However, its definition is based on counting how many observations lie inside a ball of radius $\varepsilon$. This allows us to define a local density from the cardinality of an $\varepsilon$-neighborhood.
	
	DBSCAN depends mainly on two parameters
	$
	\varepsilon$
	and
	\textsf{MinPts}.
	The $\varepsilon$-neighborhood of $x_i$ is
	$
	N_\varepsilon(x_i)
	=
	\{x_j:\|x_i-x_j\|\leq\varepsilon\}.
	$
	A point $x_i$ is a core point if
	$
	|N_\varepsilon(x_i)|\geq \textsf{MinPts}.
	$
	A border point is not a core point but is reachable from a core point. A noise point is neither a core point nor reachable from any core point. DBSCAN assigns cluster labels
	$
	c_i\in\{0,1,2,\ldots,K\},
	$
	where
	$
	c_i=0
	$
	denotes noise, and $c_i=r\geq 1$ means that $x_i$ belongs to cluster $r$.
	
	If the user does not provide $\varepsilon$, the algorithm chooses it automatically using the distances to the $\textsf{MinPts}$-th nearest neighbor. Let
	$
	d_{i,(\textsf{MinPts})}
	$
	be the distance from $x_i$ to its $\textsf{MinPts}$-th nearest neighbor. Then
	$$
	\varepsilon
	=
	Q_q\left(
	d_{1,(\textsf{MinPts})},
	d_{2,(\textsf{MinPts})},
	\ldots,
	d_{n,(\textsf{MinPts})}
	\right),
	$$
	where $Q_q$ denotes the quantile of order $q$ of the set of nearest-neighbor distances, with $0<q<1$.
	For example, if $q=0.90$, then $\varepsilon$ is the 90th percentile of the distances
	$
	d_{1,(\textsf{MinPts})},
	d_{2,(\textsf{MinPts})},
	\ldots,
	d_{n,(\textsf{MinPts})}.
	$
	In other words, $\varepsilon$ is chosen so that approximately 90\% of these $\textsf{MinPts}$-nearest-neighbor distances are less than or equal to $\varepsilon$.

	The local density of $x_i$ is defined as the number of points in its $\varepsilon$-neighborhood
	$
	\eta_i=|N_\varepsilon(x_i)|.
	$
	Equivalently, if the point $x_i$ is excluded first, we can write
	$
	\eta_i
	=
	1+
	\left|
	\{x_j:j\neq i,\ \|x_i-x_j\|\leq\varepsilon\}
	\right|.
	$
	The normalized density is
	$
	\delta_i
	=
	\frac{\eta_i}{\max_{1\leq r\leq n}\eta_r},
	\;
	0<\delta_i\leq 1.
	$
	If $x_i$ belongs to a dense region, then $\delta_i\approx 1$; if $x_i$ belongs to a sparse region, then $\delta_i$ is smaller.
	
	The first component of the DBSCAN similarity is a distance-based similarity. Let $\sigma>0$ be a scale parameter. If it is not provided, it is taken as the median of the nonzero pairwise distances
	$
	\sigma=
	\operatorname{median}\{d_{ij}:i<j,\ d_{ij}>0\}.
	$
	Then
	$$
	S_{ij}^{\textrm{\tiny dist}}
	=
	\exp\left(-\frac{\|x_i-x_j\|}{\sigma}\right).
	$$
	This quantity is close to $1$ for nearby points and close to $0$ for distant points.
	
	The second component introduces local density information. For each pair $(i,j)$, we define
	$
	F_{ij}^{\textrm{\tiny dens}}
	=
	\left(\sqrt{\delta_i\delta_j}\right)^\alpha,
	$
	where
	$
	\alpha
	$ is a parameter call the {\em density power}. 
	The geometric mean $\sqrt{\delta_i\delta_j}$ makes the density contribution large only when both points are in relatively dense regions. If $\alpha>1$, sparse regions are penalized more strongly; if $0<\alpha<1$, the density effect is smoother.
	
	The third component uses the cluster labels produced by DBSCAN. We define
	$$
	F_{ij}^{\textrm{\tiny clust}}
	=
	\begin{cases}
		1,& \text{if }c_i=c_j\text{ and }c_i,c_j>0,\\[4pt]
		\gamma,& \text{if }c_i\neq c_j\text{ and }c_i,c_j>0,\\[4pt]
		\tau,& \text{if }c_i=0\text{ or }c_j=0,
	\end{cases}
	$$
	where
	$
	\gamma
	$ is a parameter call {\em between cluster factor},
	and  $\tau$ is a parameter call {\em noise factor}.
	Usually $0<\gamma<1$ and $0<\tau<1$, so that pairs in different clusters or pairs involving noise are penalized.
	
	Combining the distance, density, and cluster components, the DBSCAN induced similarity is
	$
	S_{ij}^{\textrm{\tiny DBSCAN}}
	=
	S_{ij}^{\textrm{\tiny dist}}
	F_{ij}^{\textrm{\tiny dens}}
	F_{ij}^{\textrm{\tiny clust}},
	$
	or, explicitly,
	$$
	S_{ij}^{\textrm{\tiny DBSCAN}}
	=
	\exp\left(-\frac{\|x_i-x_j\|}{\sigma}\right)
	\left(\sqrt{\delta_i\delta_j}\right)^\alpha
	F_{ij}^{\textrm{\tiny clust}},
	\qquad i\neq j.
	$$
	Equivalently,
	$$
	S_{ij}^{\textrm{\tiny DBSCAN}}
	=
	\exp\left(-\frac{\|x_i-x_j\|}{\sigma}\right)
	\left(
	\sqrt{
		\frac{\eta_i}{\eta_{\max}}
		\frac{\eta_j}{\eta_{\max}}
	}
	\right)^\alpha
	F_{ij}^{\textrm{\tiny clust}},
	$$
	where
	$
	\eta_{\max}=\max_{1\leq r\leq n}\eta_r.
	$
	
	Optionally, we may force similarities to exist only between points connected through the $\varepsilon$-neighborhood graph. We define
	$$
	a_{ij}^{\varepsilon}
	=
	\begin{cases}
		1,&\text{if }x_j\in N_\varepsilon(x_i)\text{ or }x_i\in N_\varepsilon(x_j),\\
		0,&\text{otherwise}.
	\end{cases}
	$$
	If this option is used, the similarity is modified as
	$
	S_{ij}^{\textrm{\tiny DBSCAN}}
	\leftarrow
	S_{ij}^{\textrm{\tiny DBSCAN}}a_{ij}^{\varepsilon}.
	$
	Finally, as in the other constructions, we impose
	$
	S_{ii}^{\textrm{\tiny DBSCAN}}=0,
	\;
	\rho_{ij}^{\textrm{\tiny DBSCAN}}
	=1-S_{ij}^{\textrm{\tiny DBSCAN}}.
	$
	Thus, if two points are close, lie in dense regions, and belong to the same non-noise cluster, then
	$
	S_{ij}^{\textrm{\tiny DBSCAN}}\approx 1,
	\qquad
	\rho_{ij}^{\textrm{\tiny DBSCAN}}\approx 0,
	$
	so their Riemannian difference is strongly contracted
	$
	x_i\ominus x_j
	=
	\rho_{ij}^{\textrm{\tiny DBSCAN}}(x_i-x_j)
	\approx 0.
	$
	On the other hand, if the points are far apart, sparse, in different clusters, or if one of them is noise, then
	$
	S_{ij}^{\textrm{\tiny DBSCAN}}\approx 0,
	\;
	\rho_{ij}^{\textrm{\tiny DBSCAN}}\approx 1,
	$
	and the Riemannian difference is close to the usual Euclidean difference.
	
	\section{Riemannian Regression}
	
	Given a similarity matrix $S$ obtained by UMAP, ISOMAP, or DBSCAN, we define
	$
	\rho_{ij}=1-S_{ij}.
	$
	The Riemannian center is selected as the discrete Fr\'{e}chet mean
	$$
	g=x_\lambda
	=
	\arg\min_{x_m\in X}
	\sum_{i=1}^n d_R(x_i,x_m)^2,
	$$
	where
	$
	d_R(x_i,x_m)=\rho_{im}\|x_i-x_m\|.
	$
	The corresponding response center is
	$
	y_\lambda=y(x_\lambda).
	$
	We define the classically centered matrices with respect to the Riemannian center:
	$$
	X_{c,\lambda}=
	\begin{pmatrix}
		x_1-x_\lambda\\
		x_2-x_\lambda\\
		\vdots\\
		x_n-x_\lambda
	\end{pmatrix},
	\qquad
	y_{c,\lambda}=
	\begin{pmatrix}
		y_1-y_\lambda\\
		y_2-y_\lambda\\
		\vdots\\
		y_n-y_\lambda
	\end{pmatrix}.
	$$
	Let be
	$
	W=\operatorname{diag}(\rho_{1\lambda},\rho_{2\lambda},\ldots,\rho_{n\lambda}).
	$
	The Riemannian-centered predictor matrix and response vector are
	$
	X_R=W X_{c,\lambda},
	\;
	y_R=W y_{c,\lambda}.
	$
	Equivalently, the $i$-th Riemannian-centered row is
	$
	x_i\ominus g=\rho_{i\lambda}(x_i-x_\lambda),
	$
	and the Riemannian-centered response is
	$
	y_i\ominus y_\lambda=\rho_{i\lambda}(y_i-y_\lambda).
	$
	
	\begin{theorem}[Riemannian least-squares estimator]
		Let $g=x_\lambda$ be the Riemannian center and let $W=\operatorname{diag}(\rho_{1\lambda},\ldots,\rho_{n\lambda})$. The Riemannian regression estimator is
		$$
		\widehat\beta_R
		=
		(X_{c,\lambda}^{t}W^2X_{c,\lambda})^{-1}
		X_{c,\lambda}^{t}W^2y_{c,\lambda},
		$$
		provided that $X_{c,\lambda}^{t}W^2X_{c,\lambda}$ is nonsingular. Equivalently,
		$$
		\widehat\beta_R=(X_R^tX_R)^{-1}X_R^t y_R.
		$$
	\end{theorem}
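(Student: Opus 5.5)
The plan is to read the theorem as the normal-equations solution of the least-squares problem posed on the Riemannian-centered data. Concretely, I take the Riemannian regression estimator to be the minimizer of the residual sum of squares built from the Riemannian differences,
$$
Q(\beta)=\sum_{i=1}^n\bigl((y_i\ominus y_\lambda)-(x_i\ominus g)^t\beta\bigr)^2
=\|y_R-X_R\beta\|^2 ,
$$
using $x_i\ominus g=\rho_{i\lambda}(x_i-x_\lambda)$ and $y_i\ominus y_\lambda=\rho_{i\lambda}(y_i-y_\lambda)$, so that the rows of $X_R=WX_{c,\lambda}$ and the entries of $y_R=Wy_{c,\lambda}$ are exactly these quantities.

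First I establish the second formula. I expand $Q(\beta)=y_R^ty_R-2\beta^tX_R^ty_R+\beta^tX_R^tX_R\beta$ and take the gradient, which gives the normal equations $X_R^tX_R\beta=X_R^ty_R$. The Hessian is $2X_R^tX_R$, which is always positive semidefinite. When $X_R^tX_R$ is nonsingular it is positive definite, so $Q$ is strictly convex and its unique minimizer is $\widehat\beta_R=(X_R^tX_R)^{-1}X_R^ty_R$.

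Second I substitute $X_R=WX_{c,\lambda}$ and $y_R=Wy_{c,\lambda}$. Since $W$ is diagonal, $W^t=W$, so
$$
X_R^tX_R=X_{c,\lambda}^tW^2X_{c,\lambda},\qquad X_R^ty_R=X_{c,\lambda}^tW^2y_{c,\lambda}.
$$
Hence the nonsingularity hypothesis on $X_{c,\lambda}^tW^2X_{c,\lambda}$ is the same as nonsingularity of $X_R^tX_R$, and the first displayed formula follows. This also shows the estimator is weighted least squares on the $\lambda$-centered data with weights $\rho_{i\lambda}^2$.

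The only genuine subtlety is the row $i=\lambda$. There $x_\lambda-x_\lambda=0$ and $\rho_{\lambda\lambda}=1$ by the diagonal convention, so this row of $X_R$ is zero and contributes nothing to either side. Any other row with $\rho_{i\lambda}=0$ (that is, $S_{i\lambda}=1$) is dropped in the same way. This is why invertibility has to be stated as a hypothesis rather than inherited from $X$: it requires the rows with $\rho_{i\lambda}>0$ to span $\R^p$. I would note this explicitly, but it is the main obstacle only in the sense of being a well-posedness remark. The remaining algebra is routine.
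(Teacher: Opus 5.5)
Your proposal is correct and follows the paper's proof: you write the objective as $\|y_R-X_R\beta\|^2$, solve the normal equations $X_R^tX_R\beta=X_R^ty_R$, and substitute $X_R=WX_{c,\lambda}$, $y_R=Wy_{c,\lambda}$ using $W^t=W$. Your Hessian/strict-convexity argument for uniqueness and your remark on the zero rows of $X_R$ tighten the paper's argument, which only sets the gradient to zero.
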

	
	\begin{proof}
		Riemannian regression minimizes the weighted objective
		$$
		Q(\beta)=
		\sum_{i=1}^n
		\rho_{i\lambda}^2
		\left[
		(y_i-y_\lambda)-(x_i-x_\lambda)^t\beta
		\right]^2.
		$$
		Using $X_R=W X_{c,\lambda}$ and $y_R=W y_{c,\lambda}$, this can be written as
		$$
		Q(\beta)=\|y_R-X_R\beta\|^2.
		$$
		Differentiating and equating to zero gives
		$
		-2X_R^t(y_R-X_R\widehat\beta_R)=0,
		$
		we have
		$
		X_R^tX_R\widehat\beta_R=X_R^ty_R.
		$
		Therefore,
		$$
		\widehat\beta_R=(X_R^tX_R)^{-1}X_R^ty_R.
		$$
		Substituting $X_R=W X_{c,\lambda}$ and $y_R=W y_{c,\lambda}$, and using $W^tW=W^2$, yields
		$$
		\widehat\beta_R
		=
		(X_{c,\lambda}^{t}W^2X_{c,\lambda})^{-1}
		X_{c,\lambda}^{t}W^2y_{c,\lambda}.
		$$
	\end{proof}
	
	The fitted values on the transformed Riemannian scale are
	$
	\widehat y_R=X_R\widehat\beta_R.
	$
	On the original response scale, the fitted value for $x_i$ is
	$
	\widehat y_i
	=
	y_\lambda+(x_i-x_\lambda)^t\widehat\beta_R.
	$
	The Riemannian coefficient of determination is defined as
	$$
	R^2_R
	=
	1-
	\frac{\sum_{i=1}^n (y_{R,i}-\widehat y_{R,i})^2}{\sum_{i=1}^n y_{R,i}^2}.
	$$
	This measures goodness of fit on the transformed Riemannian response scale. If we want to evaluate the fit on the original response scale, we may also compute
	$$
	R^2_{\textrm{orig}}
	=
	1-
	\frac{\sum_{i=1}^n (y_i-\widehat y_i)^2}{\sum_{i=1}^n (y_i-\bar y)^2}.
	$$
	
	The algorithm is presented below. This  algorithm is implemented in the R package \texttt{riemannianStats}, see \cite{riemannianStatsPkg}. 
	
	\begin{algorithm}[ht]
		\caption{Riemannian Regression with UMAP, ISOMAP, or DBSCAN}
		\begin{algorithmic}[1]
			\State \textbf{Input:} Predictor matrix $X$, response vector $y$, similarity method $m\in\{\textsf{UMAP},\textsf{ISOMAP},\textsf{DBSCAN}\}$.
			\State Compute the similarity matrix $S^{(m)}=(S_{ij}^{(m)})$ using the chosen method.
			\State Set $\rho_{ij}=1-S_{ij}^{(m)}$.
			\State Compute Riemannian distances $d_R(x_i,x_j)=\rho_{ij}\|x_i-x_j\|$.
			\State Select the Riemannian center $g=x_\lambda$ by minimizing $\sum_i d_R(x_i,x_m)^2$ over $x_m\in X$.
			\State Set $y_\lambda=y(x_\lambda)$.
			\State Form $X_{c,\lambda}$ and $y_{c,\lambda}$.
			\State Form $W=\operatorname{diag}(\rho_{1\lambda},\ldots,\rho_{n\lambda})$.
			\State Compute $X_R=W X_{c,\lambda}$ and $y_R=W y_{c,\lambda}$.
			\State Estimate $\widehat\beta_R=(X_{c,\lambda}^tW^2X_{c,\lambda})^{-1}X_{c,\lambda}^tW^2y_{c,\lambda}$.
			\State Compute fitted values and $R^2_R$.
			\State \textbf{Output:} $\widehat\beta_R$, $g=x_\lambda$, $W$, $R^2_R$, and fitted values.
		\end{algorithmic}
	\end{algorithm}
	
	\section{Illustrative Examples}
	
	The following illustrative examples are intended to show how Riemannian Regression modifies the geometry of the classical regression problem by replacing ordinary Euclidean centered differences with locally weighted Riemannian differences. All numerical results in these examples can be executed and verified using the R package \texttt{riemannianStats}, which implements Riemannian methods for principal component analysis, regression, and visualization, see \cite{riemannianStatsPkg} and \cite{rodriguez_pypi}. In particular, the package provides computational tools for constructing local similarity structures, obtaining the corresponding dissimilarity coefficients $\rho_{ij}=1-S_{ij}$, computing Riemannian centers, transforming the data into Riemannian-centered coordinates, and fitting the resulting Riemannian regression models. Therefore, the examples are not only illustrative but also reproducible within a standard R environment.

	\subsection{Example 1: Observations Difficult for Classical Regression}
	
	Consider a data table with $n=10$ individuals and $p=5$ predictor variables. In this first example, observations $x_4$ and $x_5$ are deliberately chosen as difficult observations for classical regression, because their response values are atypical relative to the general trend followed by the rest of the data. The purpose is to show how the geometry induced by the similarity matrix can substantially modify the centered representation and improve the fit.
	
	\begin{center}
		\begin{tabular}{c|ccccc|r}
			\toprule
			Cases & $X_1$ & $X_2$ & $X_3$ & $X_4$ & $X_5$ & $y\;\;\;\;$ \\
			\midrule
			$x_1$  & $0.0$ & $0.0$ & $0.1$ & $0.2$ & $0.0$ & $8.77$ \\
			$x_2$  & $0.5$ & $0.1$ & $0.2$ & $0.0$ & $0.1$ & $9.09$ \\
			$x_3$  & $0.2$ & $0.8$ & $0.1$ & $0.3$ & $0.0$ & $8.35$ \\
			$x_4$  & $0.9$ & $0.9$ & $0.2$ & $0.2$ & $0.1$ & $25.00$ \\
			$x_5$  & $1.2$ & $0.3$ & $0.3$ & $0.1$ & $0.2$ & $-10.00$ \\
			$x_6$  & $1.5$ & $0.7$ & $0.3$ & $0.3$ & $0.1$ & $10.00$ \\
			$x_7$  & $4.8$ & $5.0$ & $5.0$ & $4.7$ & $4.9$ & $12.47$ \\
			$x_8$  & $5.5$ & $5.1$ & $5.1$ & $4.8$ & $5.0$ & $12.97$ \\
			$x_9$  & $5.2$ & $5.8$ & $4.9$ & $5.3$ & $5.1$ & $12.31$ \\
			$x_{10}$ & $6.0$ & $5.6$ & $5.2$ & $5.4$ & $5.3$ & $13.33$ \\
			\bottomrule
		\end{tabular}
	\end{center}
	
	Figure~\ref{fig:example1-shape} provides a simple two-dimensional illustration of the data configuration. The specific coordinates in this drawing are only schematic, but they help visualize that most observations are aligned around a common trend, whereas $x_4$ and $x_5$ appear as response outliers. Thus, the classical Euclidean fit must accommodate two observations whose $y$-values are much larger and much smaller than the remaining responses.
	
	\begin{figure}[ht]
		\centering
		\includegraphics[width=0.92\textwidth]{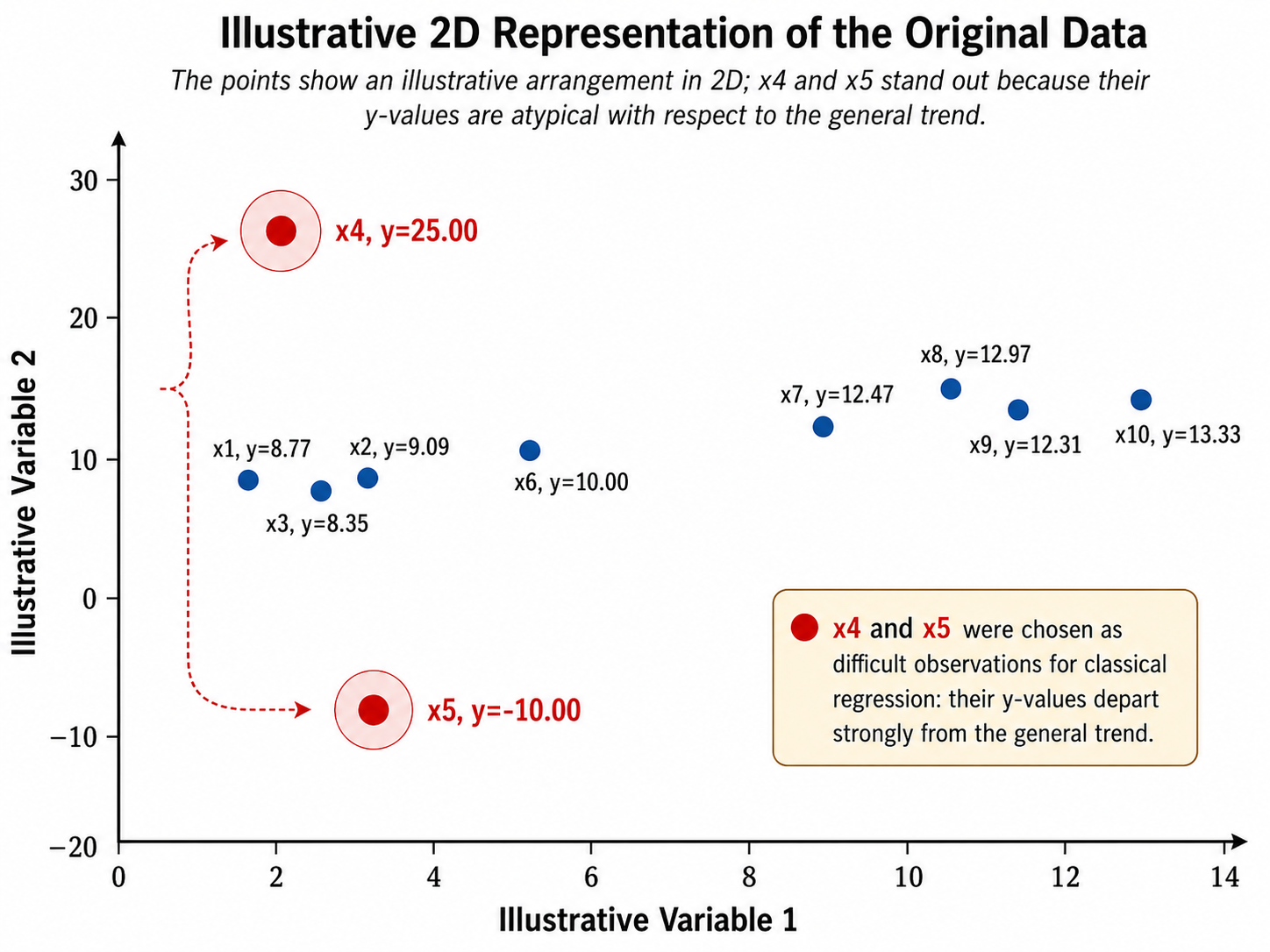}
		\caption{Illustrative two-dimensional representation for Example~1. Observations $x_4$ and $x_5$ stand out because their response values are atypical with respect to the overall trend.}
		\label{fig:example1-shape}
	\end{figure}
	
	The classical centered regression uses the Euclidean centroid
	$$
	\bar{x}=(2.58,2.43,2.14,2.13,2.08),
	\qquad
	\bar y=10.229,
	$$
	so that all centered differences are computed in the usual unweighted way. In that representation, the classical regression gives
	$$
	R^2_{\textrm{classical}}\approx 0.3665.
	$$
	Hence, only about $36.65\%$ of the variation is explained on the classical centered scale.
	
	Now consider the Riemannian version induced by UMAP similarities \cite{mcinnes}. The Riemannian center is selected by minimizing the weighted squared distances, and in this example the minimum is attained at
	$$
	g=x_\lambda=x_6=(1.5,0.7,0.3,0.3,0.1),
	\qquad
	y_\lambda=10.00.
	$$
	The corresponding Riemannian centering weights are approximately
	$$
	(\rho_{16},\rho_{26},\rho_{36},\rho_{46},\rho_{56},\rho_{66},\rho_{76},\rho_{86},\rho_{96},\rho_{10;6})
	$$
	$$
	\approx
	(1.000,0.977,1.000,0.000,0.000,1.000,1.000,1.000,1.000,1.000).
	$$
	These weights are the key feature of the example. Because
	$
	\rho_{46}=0,
	\;
	\rho_{56}=0,
	$
	the Riemannian centered differences satisfy
	$
	x_4\ominus g=0,
	\;
	x_5\ominus g=0.
	$
	Therefore, the two problematic observations are collapsed at the Riemannian center and no longer dominate the fit in the transformed geometry.
	
	The resulting Riemannian regression gives
	$$
	R^2_R\approx 0.9988.
	$$
	Thus, on the Riemannian centered scale, the model explains approximately $99.88\%$ of the variation. Figure~\ref{fig:example1-compare} compares the classical centered regression and the UMAP-induced Riemannian regression. The left panel shows the poor classical fit, while the right panel shows that the transformed observations lie almost exactly on the identity line.
	
	\begin{figure}[ht]
		\centering
		\includegraphics[width=\textwidth]{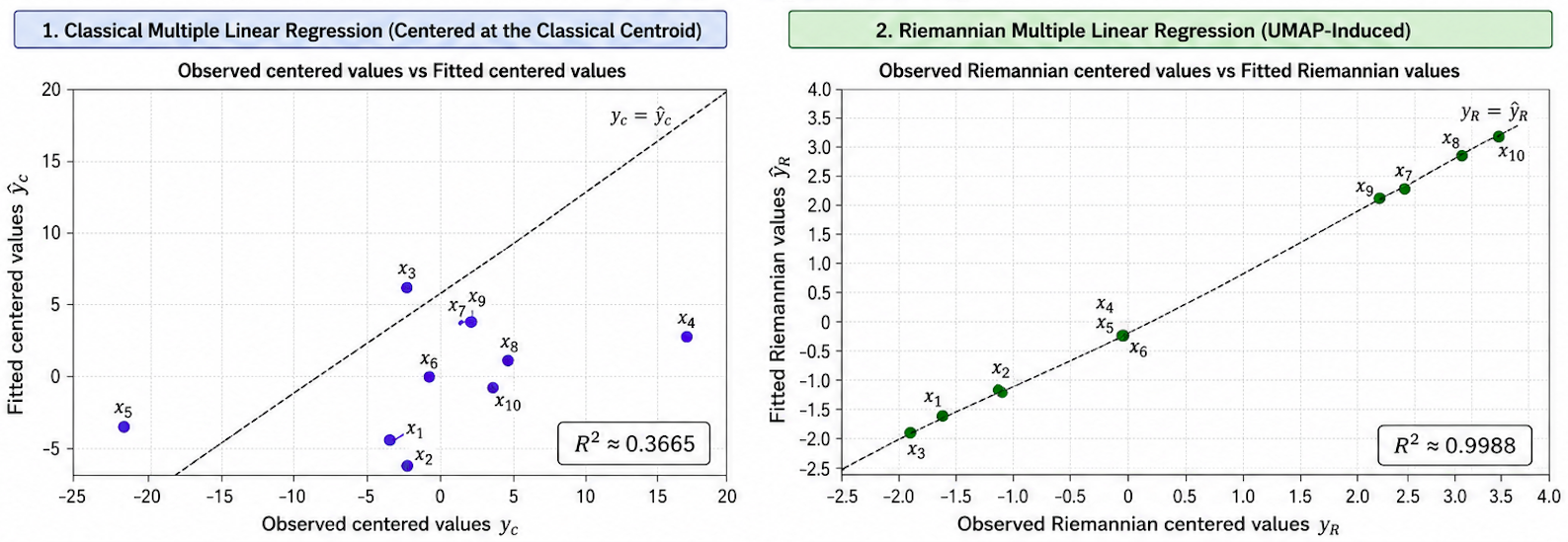}
		\caption{Comparison between classical multiple linear regression and UMAP-induced Riemannian multiple linear regression for Example~1. The atypical observations $x_4$ and $x_5$ heavily affect the classical fit, whereas the Riemannian centering makes the relationship almost perfectly linear on the transformed scale.}
		\label{fig:example1-compare}
	\end{figure}
	
	This example shows very clearly that the proposed method does not modify the algebraic linear structure of the regression model. Instead, it changes the geometry through the similarity matrix, the dissimilarity coefficients $\rho_{ij}$, and the weighted centering around the Riemannian center.
	
	\subsection{Example 2: Two Clusters with Different Densities}
	
	Consider again a data table with $n=10$ individuals and $p=5$ predictor variables. In this second example there are no artificially introduced response outliers. Instead, the data are organized into two groups with markedly different local densities: a dense cluster $A$ with six observations and a less dense cluster $B$ with four observations. The purpose is to illustrate how Riemannian Regression reacts to heterogeneity in local density.
	
	\begin{center}
		\begin{tabular}{c|c|rrrrr|c}
			\toprule
			Cases & Cluster & $X_1$ & $X_2$ & $X_3$ & $X_4$ & $X_5$ & $y$ \\
			\midrule
			$x_1$  & A & $0.00$ & $0.02$ & $-0.01$ & $0.03$ & $0.00$ & $4.286$ \\
			$x_2$  & A & $0.04$ & $-0.01$ & $0.02$ & $0.01$ & $-0.02$ & $5.486$ \\
			$x_3$  & A & $-0.03$ & $0.01$ & $0.00$ & $-0.02$ & $0.02$ & $4.776$ \\
			$x_4$  & A & $0.02$ & $0.04$ & $-0.02$ & $0.00$ & $0.01$ & $4.762$ \\
			$x_5$  & A & $-0.01$ & $-0.03$ & $0.01$ & $0.02$ & $-0.01$ & $5.015$ \\
			$x_6$  & A & $0.03$ & $0.00$ & $0.03$ & $-0.01$ & $0.00$ & $5.289$ \\
			$x_7$  & B & $3.00$ & $3.50$ & $2.80$ & $3.20$ & $3.10$ & $4.500$ \\
			$x_8$  & B & $4.00$ & $2.90$ & $3.40$ & $3.80$ & $2.70$ & $3.698$ \\
			$x_9$  & B & $2.70$ & $4.10$ & $3.60$ & $2.90$ & $3.90$ & $3.718$ \\
			$x_{10}$ & B & $4.50$ & $4.20$ & $2.90$ & $4.30$ & $3.50$ & $4.655$ \\
			\bottomrule
		\end{tabular}
	\end{center}
	
	The geometric structure of the data is shown schematically in Figure~\ref{fig:example2-shape}. Cluster $A$ forms a compact cloud of six observations, whereas cluster $B$ is formed by four observations that are farther apart from one another. In other words, the distinction between the two groups is not driven by response anomalies, but by a strong difference in local density.
	
	\begin{figure}[ht]
		\centering
		\includegraphics[width=0.92\textwidth]{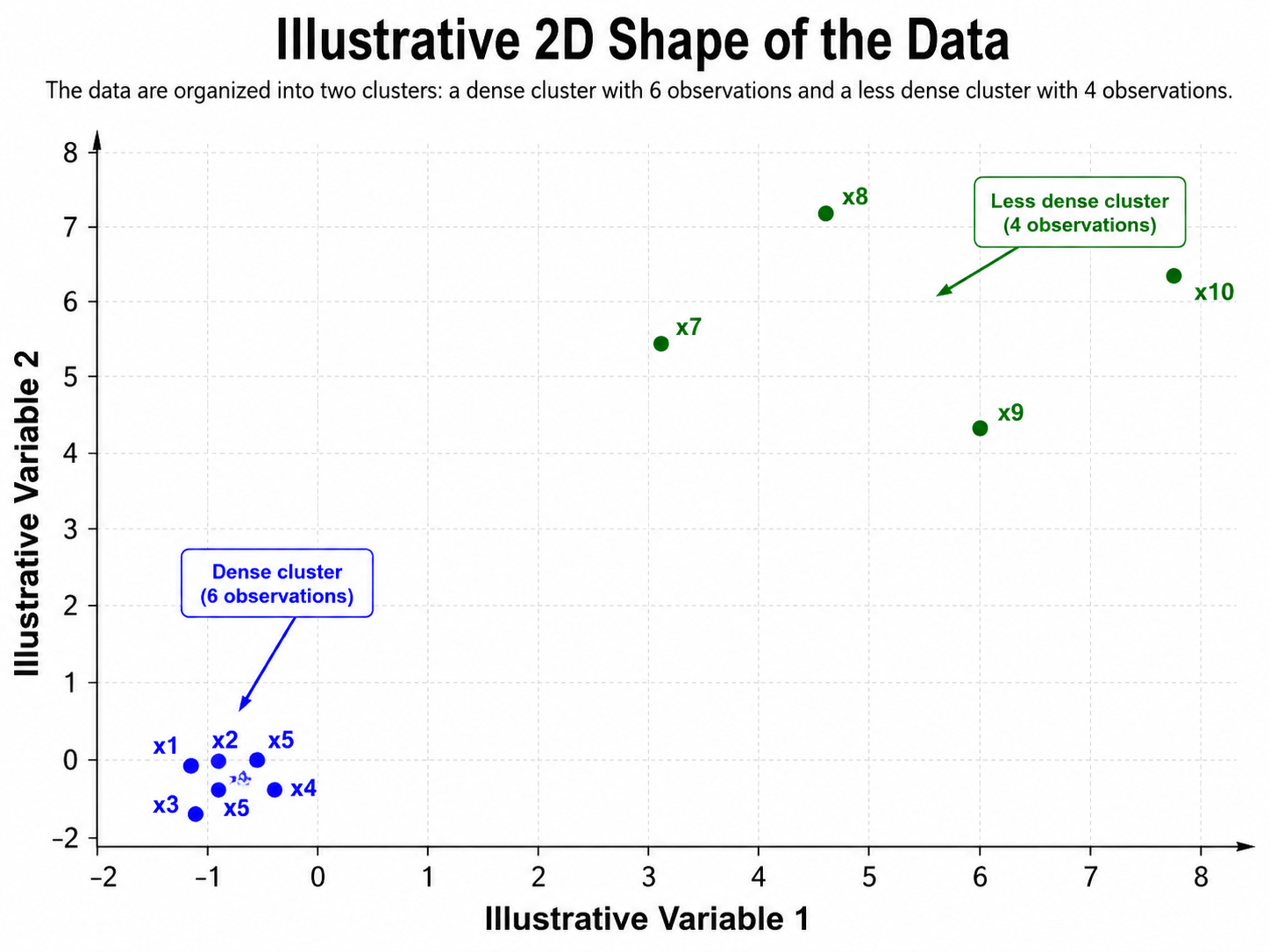}
		\caption{Illustrative two-dimensional representation for Example~2. The data are organized into a dense cluster with six observations and a less dense cluster with four observations.}
		\label{fig:example2-shape}
	\end{figure}
	
	Quantitatively, the average pairwise distance within cluster $A$ is approximately $0.065$, whereas the average pairwise distance within cluster $B$ is approximately $1.897$. Therefore, cluster $A$ is much denser than cluster $B$.
	The classical centered regression gives
	$$
	R^2_{\textrm{classical}}\approx 0.6922.
	$$
	This is already a reasonable fit, but it treats every observation with the same Euclidean centering mechanism.
	Using the local geometry induced by UMAP \cite{mcinnes}, the Riemannian center is again found at
	$
	g=x_\lambda=x_6,
	\;
	y_\lambda=y_6=5.289.
	$
	The associated weights are approximately
	$$
	(\rho_{16},\rho_{26},\rho_{36},\rho_{46},\rho_{56},\rho_{66},\rho_{76},\rho_{86},\rho_{96},\rho_{10,6})
	$$
	$$
	\approx
	(0.504,0.000,0.555,0.392,0.465,1.000,0.998,0.994,0.953,1.000).
	$$
	These values show a characteristic effect of the geometry. Since the center belongs to the dense cluster $A$, several observations in that cluster receive smaller weights, which contracts them toward the center in the transformed space. On the other hand, the observations in the sparser cluster $B$ receive weights close to $1$, so they preserve more of their original centered differences.
	The Riemannian regression then yields
	$$
	R^2_R\approx 0.9469.
	$$
	The increase with respect to the classical fit is
	$
	0.9469-0.6922=0.2547,
	$
	that is, about $25.47$ percentage points. Figure~\ref{fig:example2-compare} displays the full comparison between the classical centered regression and the UMAP-induced Riemannian regression.
	
	\begin{figure}[ht]
		\centering
		\includegraphics[width=\textwidth]{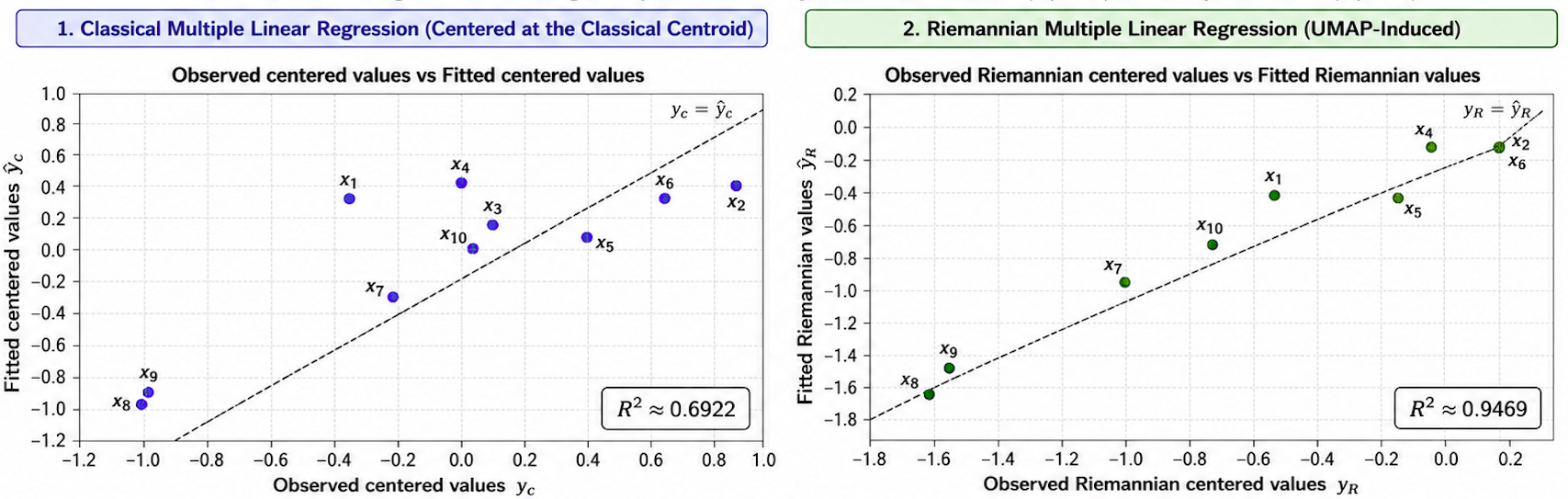}
		\caption{Comparison between classical multiple linear regression and UMAP-induced Riemannian multiple linear regression for Example~2. The main effect here is not the presence of outliers, but the difference between a dense cluster and a sparser cluster.}
		\label{fig:example2-compare}
	\end{figure}
	
	This second example shows that the proposed method can improve the fit even when there are no anomalous responses. The benefit comes from adapting the geometry of the regression problem to the local density structure of the data.
	
	\subsection{Example 3: The Abalone Data Set}
	
	The \textit{Abalone} data set \cite{abaloneUCI}, from the UCI Machine Learning Repository, contains morphometric information for $4177$ abalones. 
	Each observation corresponds to one individual and includes physical measurements such as length, diameter, height, whole weight, shucked weight, viscera weight, and shell weight. The response variable is the estimated age, computed from the number of rings as
	$
	\text{Age}=\text{Rings}+1.5.
	$
	
	This data set is useful for evaluating Riemannian Regression because the relationship between the physical measurements and age is not purely linear. Growth patterns may vary across groups, and the data may contain nonlinear local structures, density variations, and morphological variability. Therefore, a classical linear model can capture part of the global trend, but it may fail to represent local geometric effects.
	
	The classical linear regression model gives
	$
	R^2_{\textrm{lm}} \approx 0.5379.
	$
	Thus, the Euclidean linear model explains approximately $53.79\%$ of the variability in age.
	
	We compare this result with three versions of Riemannian Regression, where the local geometry is generated using UMAP, ISOMAP, and DBSCAN. The following table summarizes the values of $R^2$ obtained for different values of the neighborhood parameter, see the Figure \ref{fig:abalone_r2_comparison}.
	
	\begin{center}
		\begin{tabular}{c|cccc}
			\hline
			$n.\textsf{neighbors}$ & UMAP & ISOMAP & DBSCAN & Classical lm \\
			\hline
			$10$  & $0.7275$ & $0.8088$ & $0.7610$ & $0.5379$ \\
			$20$  & $0.7278$ & $0.8088$ & $0.7703$ & $0.5379$ \\
			$30$  & $0.7281$ & $0.8088$ & $0.7732$ & $0.5379$ \\
			$50$  & $0.7284$ & $0.8187$ & $0.6192$ & $0.5379$ \\
			$75$  & $0.7285$ & $0.5902$ & $0.7847$ & $0.5379$ \\
			$100$ & $0.7287$ & $0.5863$ & $0.7873$ & $0.5379$ \\
			$150$ & $0.7288$ & $0.5862$ & $0.7926$ & $0.5379$ \\
			\hline
		\end{tabular}
	\end{center}
	
	\begin{figure}[h]
		\centering
		\includegraphics[scale=0.30]{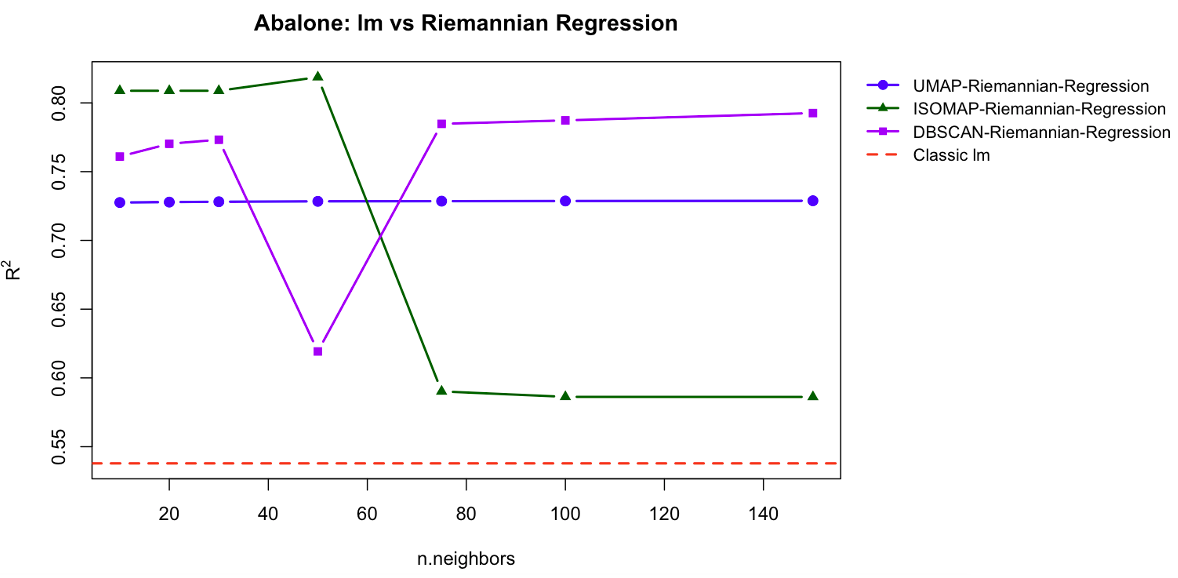}
		\caption{Comparison of the coefficient of determination $R^2$ for classical linear regression and Riemannian Regression using UMAP, ISOMAP, and DBSCAN similarities on the Abalone data set.}
		\label{fig:abalone_r2_comparison}
	\end{figure}
	
	The UMAP-based Riemannian Regression is very stable across the values of $n.\textrm{neighbors}$. Its coefficient of determination remains close to
	$
	R^2_{\textrm{UMAP}} \approx 0.728.
	$
	This represents an improvement of approximately
	$
	0.7288-0.5379 \approx 0.1909,
	$
	that is, about $19.09$ percentage points over the classical linear model. This suggests that the UMAP-induced local geometry captures a nonlinear structure that is not represented by the Euclidean regression.
	
	The ISOMAP-based Riemannian Regression gives the best result for small and moderate neighborhood sizes. In particular, for $n.\textrm{neighbors}=50$, it reaches
	$
	R^2_{\textrm{ISOMAP}} \approx 0.8187.
	$
	This is the highest value obtained in this experiment. Compared with the classical model, the improvement is
	$
	0.8187-0.5379 \approx 0.2808,
	$
	or approximately $28.08$ percentage points. This indicates that, for this range of neighborhood sizes, the graph-geodesic distances generated by ISOMAP provide a very effective local geometry for modeling the relationship between the abalone measurements and age.
	
	However, the ISOMAP results also show sensitivity to the neighborhood parameter. For larger values, such as $n.\textrm{neighbors}=75,100,150$, the coefficient of determination decreases to approximately
	$
	R^2_{\textrm{ISOMAP}}\approx 0.59.
	$
	This behavior is consistent with the nature of ISOMAP: when the neighborhood graph becomes too dense, the shortest-path distances may lose part of their geodesic character and become closer to ordinary Euclidean distances. Therefore, ISOMAP can be very powerful, but it requires a careful selection of the neighborhood size.
	
	The DBSCAN-based Riemannian Regression also improves substantially over the classical linear model. Its best value in the grid is obtained at $n.\textrm{neighbors}=150$, this is
	$
	R^2_{\textrm{DBSCAN}} \approx 0.7926.
	$
	This corresponds to an improvement of
	$
	0.7926-0.5379 \approx 0.2547,
	$
	or about $25.47$ percentage points. This suggests that the density-based similarity induced by DBSCAN captures meaningful local structure in the Abalone data set.
	
	Unlike UMAP, DBSCAN is not based only on nearest-neighbor affinities. It incorporates information about local density, cluster membership, and possible noise points. Therefore, its strong performance indicates that the Abalone data may contain regions with different local densities, and that accounting for these density variations improves the regression fit.
	
	Overall, all three Riemannian approaches outperform the classical linear model in this experiment:
	$
	R^2_{\textrm{lm}} \approx 0.5379,
	$
	whereas the Riemannian versions achieve values between approximately $0.728$ and $0.819$ for suitable parameter choices. The best performance is obtained with ISOMAP at $n.\textrm{neighbors}=50$:
	$
	R^2_{\textrm{ISOMAP}} \approx 0.8187.
	$
	
	These results show that the improvement does not come from changing the linear regression formula itself, but from changing the geometry in which the centered differences are computed. Classical regression uses the Euclidean differences
	$
	x_i-\bar{x},
	\qquad
	y_i-\bar{y},
	$
	whereas Riemannian Regression uses locally weighted differences of the form
	$
	x_i\ominus g
	=
	\rho_{i\lambda}(x_i-x_\lambda),
	\qquad
	y_i\ominus y_\lambda
	=
	\rho_{i\lambda}(y_i-y_\lambda).
	$
	
	Thus, the Abalone example provides empirical evidence that incorporating local geometry through UMAP, ISOMAP, or DBSCAN can reveal a stronger linear relationship in the transformed Riemannian coordinates than the we observed in the original Euclidean coordinates.
	
	In summary:
	\begin{itemize}
		\item UMAP gives a stable improvement across all tested neighborhood sizes.
		\item ISOMAP gives the highest value of $R^2$, but it is more sensitive to the neighborhood parameter.
		\item DBSCAN gives a strong improvement by incorporating density and cluster information.
		\item The classical linear model explains about $53.79\%$ of the variability, while the best Riemannian model explains about $81.87\%$.
	\end{itemize}

	\section{Conclusions}
	
	This paper presents Riemannian Regression, a generalization of classical multiple linear regression based on locally induced metrics. Starting from a similarity matrix $S$, the method constructs dissimilarity weights $\rho_{ij}=1-S_{ij}$ and uses them to define Riemannian differences. The regression problem becomes a weighted least-squares problem centered at a Riemannian center $g=x_\lambda$.
	
	The framework unifies three ways to define the local geometry: UMAP, ISOMAP, and DBSCAN. UMAP provides fuzzy similarities from a nearest-neighbor graph; ISOMAP provides normalized geodesic dissimilarities; DBSCAN provides density- and cluster-based similarities. Each method induces a different geometry and therefore a different regression fit.
	
	The examples show that Riemannian Regression can reduce the influence of difficult observations and adapt to regions with different local densities. Future work includes systematic comparison of the three similarity methods, sensitivity analysis with respect to their hyperparameters, inference for the Riemannian coefficients, and extensions to generalized linear models, classification, and symbolic data analysis.

\end{document}